\documentclass{article}
\usepackage{fullpage}
\usepackage{graphicx}
\usepackage{epstopdf}
\usepackage{latexsym}
\usepackage{amsmath}
\usepackage{amsfonts}
\usepackage{amssymb}
\usepackage{mathrsfs}
\usepackage{pifont}
\usepackage{yhmath}
\usepackage{undertilde}
\usepackage{bbm}
\usepackage{dsfont}
\usepackage{eufrak}
\usepackage{amsthm}
\usepackage[usenames,dvipsnames]{color}
\usepackage{stmaryrd}
\usepackage{wasysym}
\usepackage{bbm}
\usepackage[ruled]{algorithm2e}

\bibliographystyle{unsrt}

\newtheorem{theorem}{Theorem}[section]

\theoremstyle{remark}

\begin{document}
\newcounter{my}
\newenvironment{mylabel}
{
\begin{list}{(\roman{my})}{
\setlength{\parsep}{-1mm}
\setlength{\labelwidth}{8mm}
\usecounter{my}}
}{\end{list}}

\newcounter{my2}
\newenvironment{mylabel2}
{
\begin{list}{(\alph{my2})}{
\setlength{\parsep}{-0mm} \setlength{\labelwidth}{8mm}
\setlength{\leftmargin}{3mm}
\usecounter{my2}}
}{\end{list}}

\newcounter{my3}
\newenvironment{mylabel3}
{
\begin{list}{(\alph{my3})}{
\setlength{\parsep}{-1mm}
\setlength{\labelwidth}{8mm}
\setlength{\leftmargin}{10mm}
\usecounter{my3}}
}{\end{list}}

\vspace{-5em}

\title{\bf A combinatorial property of flows on a cycle}

 \author{Zhuo Diao ${}^{a}$\thanks{Corresponding author. E-mail: diaozhuo@amss.ac.cn.
 Supported by Young Teachers Development Foundation of CUFE (Grant No.QJJ1702)}
}
\date{
 ${}^a$ School of Statistics and Mathematics, Central University of Finance and Economics
Beijing 100081, China\\
}
\maketitle

\begin{abstract}
In this paper, we prove a combinatorial property of flows on a cycle. $C(V,E)$ is an undirected cycle with two commodities: $\{s_{1},t_{1}\}, \{s_{2},t_{2}\}$;$r_1>0,r_2>0, \mathbf r=(r_i)_{i=1,2}$ and $f,f'$ are both feasible flows for $(C,(s_i,t_i)_{i=1,2},\mathbf r)$. Then $\exists i\in\{1,2\}, p\in P_i, f(p)>0, \forall e\in p, f(e)\geq f'(e)$ ; Here for each $i\in\{1,2\}$, let $P_i$ be the set of $s_i$-$t_i$ paths in $C$ and $P=\cup_{i=1,2}P_i$. This means given a two-commodity instance on a cycle, any two distinct network flow $f$ and $f'$, compared with $f'$, $f$ can't decrease every path's flow amount at the same time. This combinatorial property is a generalization from single-commodity case to two-commodity case, and we also give an instance to illustrate the combinatorial property doesn't hold on for $k-$commodity case when $k\geq 3$.
\end{abstract}

\noindent{\bf Keywords}: {flow; cycle; two commodities; combinatorial property}

\section{Introduction}

Network flow is a very hot topic in graph theory and combinatorial optimization. There are a lot of literatures about network flow theory\cite{beckmann1956}\cite{braess1968}\cite{hl1997}\cite{hl2003}\cite{hm2014}\cite{lrtw2011}\cite{milchtaich2006}\cite{rt2002}.\\\\
Formally, $G(V,E)$ is an undirected graph. $\{s_{i},t_{i}\},i\in \{1,...,k\}$ are $k$ origin-terminal pairs, here $k$ is a positive integer, and for any $i\in \{1,...,k\}, s_{i}\neq t_{i}$, there is at least one path from $s_{i}$ to $t_{i}$ in $G(V,E)$. For any $i\in \{1,...,k\}$, $P_{i}$ is the set of paths from $s_{i}$ to $t_{i}$ in $G(V,E)$ and $P =\cup_{i\in \{1,...,k\}}P_{i}$. For any ~$i\in \{1,...,k\}$, we need to transmit a traffic of $r_{i}>0$ from $s_{i}$ to $t_{i}$ in $G(V,E)$. $<G(V,E),\{s_{i},t_{i}\},r_{i}>0,i\in \{1,...,k\}>$ is a flow instance. $f: P\rightarrow \mathbb{R}_{+}\geq 0$ satisfying for any $i\in \{1,...,k\}$, $\sum_{p\in P_{i}}f(p)= r_{i}$ is called a feasible flow. Correspondingly, for any edge $e\in E, f(e)=\sum_{p\in P_{i},e\in p}f(p)$ is the flow amount on the edge $e$ under the feasible flow $f$.\\\\
Our contribution: In this paper, we consider the flow allocation on a cycle.\\\\
First, it is easy to see a combinatorial property: given a single-commodity instance on a cycle, any two distinct network flows $f$ and $f'$, compared with $f'$, $f$ can't decrease every path's flow amount at the same time.\\\\
Second, the combinatorial property is generalized from single-commodity case to two-commodity case, which says given a two-commodity instance on a cycle, any two distinct network flow $f$ and $f'$, compared with $f'$, $f$ can't decrease every path's flow amount at the same time.\\\\
Last, we give an instance to illustrate the combinatorial property doesn't hold on for $k-$commodity case when $k\geq 3$.
\section{A combinatorial property of flows on a cycle}

Here we discuss a combinatorial property of flows on a cycle.

\subsection{One-commodity flow on a cycle}

First, it is easy to see a combinatorial property: given a single-commodity instance on a cycle, any two distinct network flows $f$ and $f'$, compared with $f'$, $f$ can't decrease every path's flow amount at the same time. We formally describe this result as follows:

\begin{theorem}\label{flow allocation1}{ $C(V,E)$ is an undirected cycle with single-commodity: $\{s,t\};r>0$ and $f,f'$ are both feasible flows for $(C,(s,t),r)$. Then $\exists p\in P, f(p)>0, \forall e\in p, f(e)\geq f'(e)$ ; Here let $P$ be the set of $s$-$t$ paths in $C$.}
\end{theorem}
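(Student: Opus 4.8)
The plan is to exploit the extremely simple structure of a cycle: between the two terminals $s$ and $t$ there are exactly two internally disjoint paths, call them $p_1$ and $p_2$, which together use every edge of $C$ exactly once. Since $P = \{p_1, p_2\}$, any feasible flow is completely determined by the single number $f(p_1) \in [0, r]$ (with $f(p_2) = r - f(p_1)$), and similarly $f'$ is determined by $f'(p_1)$.

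The key observation I would carry out first is the monotone relationship between path-flow and edge-flow on a cycle. For an edge $e$ lying on $p_1$ (and hence not on $p_2$), the only $s$--$t$ path through $e$ is $p_1$, so $f(e) = f(p_1)$; likewise for $e \in p_2$ we have $f(e) = f(p_2) = r - f(p_1)$. Thus along $p_1$ every edge carries exactly $f(p_1)$ and along $p_2$ every edge carries exactly $r - f(p_1)$. The edge-flows are therefore constant along each of the two paths, which collapses the edge-by-edge comparison $f(e) \ge f'(e)$ into a comparison of just two real numbers per path.

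With this reduction the proof is a dichotomy on whether $f(p_1) \ge f'(p_1)$ or not. If $f(p_1) \ge f'(p_1)$, then $f(p_1) > 0$ unless $f(p_1) = f'(p_1) = 0$; assuming we are in the strict or positive case, taking $p = p_1$ gives $f(p_1) > 0$ and $f(e) = f(p_1) \ge f'(p_1) = f'(e)$ for every $e \in p_1$. Otherwise $f(p_1) < f'(p_1)$, which forces $f(p_2) = r - f(p_1) > r - f'(p_1) = f'(p_2) \ge 0$, so $f(p_2) > 0$ and $f(e) = f(p_2) > f'(p_2) = f'(e)$ for every $e \in p_2$; taking $p = p_2$ finishes this case. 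Since $r > 0$, at least one of $f(p_1), f(p_2)$ is strictly positive, so the degenerate possibility is handled by choosing whichever path carries positive flow and checking it dominates.

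I do not anticipate a genuine obstacle here — the single-commodity result is essentially a two-variable case analysis, which is presumably why the author calls it ``easy to see.'' The only point requiring a little care is the boundary bookkeeping when one path carries zero flow, ensuring that the path $p$ we select simultaneously satisfies $f(p) > 0$ \emph{and} $f(e) \ge f'(e)$ on all its edges; the argument above arranges this by always selecting the path whose $f$-value has not decreased (and, when a tie at zero occurs, the other path must carry all of $r$ and hence dominate). The real difficulty, which I expect to surface only in the two-commodity generalization, is that there each edge may be shared by paths of both commodities, so edge-flows are no longer constant along a path and the clean reduction above breaks down.
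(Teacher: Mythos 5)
Your proof is correct and follows essentially the same route as the paper's: identify the two edge-disjoint $s$--$t$ paths $p_1,p_2$, observe that $f(e)=f(p_1)$ on $p_1$ and $f(e)=f(p_2)=r-f(p_1)$ on $p_2$, compare path flows, and handle the zero-flow boundary case by switching to the other path, which must carry all of $r$. No gaps; your treatment of the degenerate tie at zero matches the paper's handling of the $f(p_1)=0$ case.
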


\begin{proof} Now we denote these two edge-disjoint $s-t$ paths in $P$ as $p_{1},p_{2}$. $f$ and $f'$ are both feasible flows, thus $f(p_{1})+ f(p_{2})= f'(p_{1})+ f'(p_{2})= r$. Thus $f(p_{1})\geq f'(p_{1})$ or $f(p_{2})\geq f'(p_{2})$. Without loss of generality, we can assume $f(p_{1})\geq f'(p_{1})$, then $\forall e\in p_{1}, f(e)=f(p_{1})\geq f'(p_{1})=f'(e)$. If $f(p_{1})>0$, our theorem holds on. If $f(p_{1})=0$, then $f(p_{2})= r\geq f'(p_{2}), f(p_{2})>0, \forall e\in p_{2}, f(e)=f(p_{2})\geq f'(p_{2})=f'(e)$, our theorem also holds on.
\end{proof}

\subsection{Two-commodity flow on a cycle}

Second, the combinatorial property is generalized from single-commodity case to two-commodity case, which says given a two-commodity instance on a cycle, any two distinct network flow $f$ and $f'$, compared with $f'$, $f$ can't decrease every path's flow amount at the same time. We formally describe this result as follows:

\begin{theorem}\label{flow allocation2}{ $C(V,E)$ is an undirected cycle with two commodities: $\{s_{1},t_{1}\}, \{s_{2},t_{2}\}$;
$r_1>0,r_2>0, \mathbf r=(r_i)_{i=1,2}$ and $f,f'$ are both feasible flows for $(C,(s_i,t_i)_{i=1,2},\mathbf r)$. Then $\exists i\in\{1,2\}, p\in P_i, f(p)>0, \forall e\in p, f(e)\geq f'(e)$ ; Here for each $i\in\{1,2\}$, let $P_i$ be the set of $s_i$-$t_i$ paths in $C$ and $P=\cup_{i=1,2}P_i$.}
\end{theorem}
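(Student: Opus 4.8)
The plan is to exploit the defining feature of a cycle: for each commodity $i$ there are exactly two $s_i$–$t_i$ paths, $p_i^1$ and $p_i^2$, they are edge-disjoint, and together they cover every edge of $C$. Consequently every edge $e$ lies on exactly one path of commodity $1$ and exactly one path of commodity $2$, so $f(e)$ equals the sum of the $f$-values of the two paths (one from each commodity) through $e$, and likewise for $f'$. Writing $\Delta(p) = f(p) - f'(p)$ and $\Delta(e) = f(e) - f'(e)$, feasibility gives $\Delta(p_1^1) + \Delta(p_1^2) = 0$ and $\Delta(p_2^1) + \Delta(p_2^2) = 0$; set $a = \Delta(p_1^1) = -\Delta(p_1^2)$ and $b = \Delta(p_2^1) = -\Delta(p_2^2)$. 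Then on each edge $\Delta(e) = \pm a \pm b$, with signs determined by which path of each commodity passes through $e$.

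First I would record that the four terminals split the cycle into arcs and that, up to rotation, reflection and relabeling, there are only two cyclic orders: the non-interleaving order $s_1, t_1, s_2, t_2$ and the interleaving order $s_1, s_2, t_1, t_2$ (the two chords either cross or not); coincidences of terminals are absorbed as empty arcs. In each case I would compute $\Delta(e)$ on every arc in terms of $a, b$, and call a path \emph{good} if $\Delta(e) \ge 0$ for all its edges. The conclusion we want is exactly the existence of a good path $p$ with $f(p) > 0$.

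The combinatorial heart is that a good path always exists, which I would obtain by checking that the four goodness regions cover the whole $(a,b)$-plane. In the non-interleaving order the two single-arc paths $p_1^1$ and $p_2^1$ are good precisely when $a \ge b$ and when $b \ge a$ respectively, and one of these always holds. In the interleaving order each path uses two arcs and the goodness conditions become $a \ge |b|$, $a \le -|b|$, $b \ge |a|$, $b \le -|a|$, four closed cones whose union is $\mathbb{R}^2$. Either way some path is good.

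The delicate part — and where I expect the real work — is upgrading \emph{good} to \emph{good with positive flow}, since the single-commodity proof already shows this positivity clause is the subtle one. Here I would argue that if a good path $p$ has $f(p) = 0$ then $\Delta(p) = -f'(p) \le 0$, and I would feed this sign information back into the covering inequalities. In each configuration this extra sign constraint either exhibits a second good path carrying positive flow, or it pins $a$ and $b$ down to force $a = b = 0$; in that boundary case $f$ and $f'$ agree on every path and hence $\Delta(e) = 0$ on every edge, so every path is good and any positive-flow path works — and one exists because $r_1 > 0$ forces $f(p_1^1) > 0$ or $f(p_1^2) > 0$. Carrying this bookkeeping through each subcase of each configuration is the only laborious step; the geometric cone-covering that guarantees a good path is the conceptual core.
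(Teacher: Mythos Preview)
Your plan is correct and lands on the same two-configuration split (separated vs.\ interleaved terminal pairs) that the paper uses, but your packaging is tidier. The paper treats $|V^*|=2,3,4$ as separate top-level cases and, inside each, writes out every edge flow explicitly in the four path-flow variables and then grinds through nested subcases on inequalities like $f(l_1)-f(l_3)\gtrless f'(l_1)-f'(l_3)$; your substitution $a=\Delta(p_1^1)$, $b=\Delta(p_2^1)$ reduces all of that to $\Delta(e)\in\{\pm a\pm b\}$ and turns ``some path is good'' into the covering of the $(a,b)$-plane by half-planes or quadrant cones, which makes the result feel less like a coincidence. The positivity upgrade is also the same in substance: the paper checks subcase by subcase that a zero-flow good path either reveals another good path with positive flow or forces a contradiction, while you phrase it as ``$f(p)=0\Rightarrow\Delta(p)\le 0$, feed this back into the cone.'' One point to watch when you write it out: absorbing terminal coincidences as empty arcs \emph{enlarges} the goodness regions (vacuous constraints drop out), so the clean ``interleaved good path with zero flow forces $a=b=0$'' shortcut is only literally true when $|V^*|=4$; in the degenerate cases you will genuinely need the ``second good path'' branch of your dichotomy, exactly as the paper's Case~2 does. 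That bookkeeping is routine, as you anticipate.
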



\begin{proof} Let us denote $V^{*}=\cup_{i=1,2}\{s_{i},t_{i}\}$\\

\noindent$\displaystyle Case$1$:$ $|V^{*}|=2$, then $\{s_{1},t_{1}\}=\{s_{2},t_{2}\}$, according to the symmetry of terminals and flows, we just need to prove when $s_{1}=s_{2},t_{1}=t_{2}$, our theorem holds on. Now we denote these two edge-disjoint $s_{1}-t_{1}(s_{2}-t_{2})$ paths in $P_1(P_2)$ as $p_{1},p_{2}$. $f$ and $f'$ are both feasible flows, thus $f(p_{1})+ f(p_{2})= f'(p_{1})+ f'(p_{1})= r_{1}+r_{2}$. Thus $f(p_{1})\geq f'(p_{1})$ or $f(p_{2})\geq f'(p_{2})$. Without loss of generality, we can assume $f(p_{1})\geq f'(p_{1})$, then $\forall e\in p_{1}, f(e)=f(p_{1})\geq f'(p_{1})=f'(e)$. If $f(p_{1})>0$, our theorem holds on. If $f(p_{1})=0$, then $f(p_{2})= r_{1}+r_{2}\geq f'(p_{2}), f(p_{2})>0, \forall e\in p_{2}, f(e)=f(p_{2})\geq f'(p_{2})=f'(e)$, our theorem also holds on.\\

\noindent$\displaystyle Case $2$:$ $|V^{*}|=3$, according to the symmetry of terminals and flows, we just need to prove when $s_{1}=s_{2}\neq t_{1}\neq t_{2}$, our theorem holds on. Here we denote $P_1=\{ l_{1}, l_{2}\cup l_{3}\}, P_2=\{ l_{2}, l_{1}\cup l_{3}\}$, which is shown in Figure~\ref{lemma2}(a). $f$ and $f'$ are both feasible flows for $(G,(s_i,t_i)_{i=1,2},\mathbf r)$. Thus we have next equations:

\begin{figure}[htbp]
\begin{center}
\includegraphics[scale=0.3]{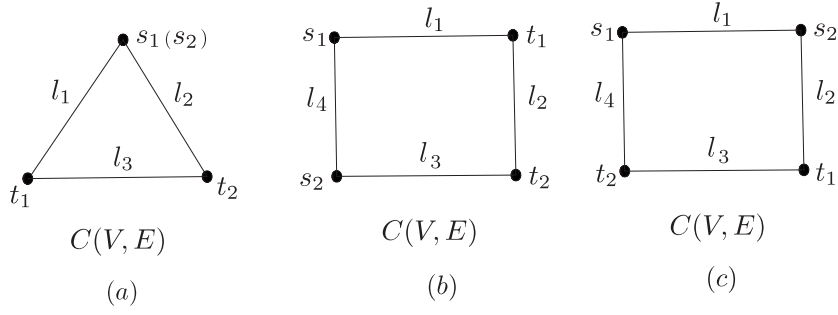}
\caption{\label{lemma2}$2$-commodity $(C,(s_i,t_i)_{i=1}^2)$ }
\end{center}
\end{figure}

\begin{equation} f(l_{1})+ f(l_{2}\cup l_{3})= f'(l_{1})+ f'(l_{2}\cup l_{3})= r_{1};
\end{equation}

\begin{equation} f(l_{2})+ f(l_{1}\cup l_{3})= f'(l_{2})+ f'(l_{1}\cup l_{3})= r_{2};
\end{equation}

\begin{equation} \begin{aligned}   \forall e\in l_{1}, f(e)= f(l_{1})+ f(l_{1}\cup l_{3})= f(l_{1})+ r_{2}- f(l_{2});\\
                                                          f'(e)= f'(l_{1})+ f'(l_{1}\cup l_{3})= f'(l_{1})+ r_{2}- f'(l_{2});
\end{aligned}\end{equation}

\begin{equation} \begin{aligned}   \forall e\in l_{2}, f(e)= f(l_{2})+ f(l_{2}\cup l_{3})= f(l_{2})+ r_{1}- f(l_{1});\\
                                                          f'(e)= f'(l_{2})+ f'(l_{2}\cup l_{3})= f'(l_{2})+ r_{1}- f'(l_{1});
\end{aligned}\end{equation}

\begin{equation} \begin{aligned}   \forall e\in l_{3}, f(e)= f(l_{1}\cup l_{3})+ f(l_{2}\cup l_{3})= r_{1}+ r_{2}- f(l_{1})- f(l_{2});\\
                                                          f'(e)= f'(l_{1}\cup l_{3})+ f'(l_{2}\cup l_{3})= r_{1}+ r_{2}- f'(l_{1})- f'(l_{2});
\end{aligned}\end{equation}

\noindent$\displaystyle When$ $f(l_{1})>0$: \\

If $f(l_{1})- f(l_{2})\geq f'(l_{1})- f'(l_{2})$, then $\forall e\in l_{1}, f(e)\geq f'(e)$, our theorem holds on; \\

If $f(l_{1})- f(l_{2})< f'(l_{1})- f'(l_{2})$, then $\forall e\in l_{2}, f(e)> f'(e)$. Now if $f(l_{2})>0$, our theorem holds on;\\

If $f(l_{1})- f(l_{2})< f'(l_{1})- f'(l_{2})$ and $f(l_{2})=0$, we have $f(l_{1})+ f(l_{2})= f(l_{1})- f(l_{2})< f'(l_{1})- f'(l_{2})\leq f'(l_{1})+ f'(l_{2})$,
then $\forall e\in l_{2}, f(e)> f'(e), \forall e\in l_{3}, f(e)> f'(e)$. Now if $f(l_{2}\cup l_{3})>0$, our theorem holds on;\\

If $f(l_{1})- f(l_{2})< f'(l_{1})- f'(l_{2})$, $f(l_{2})=0$ and $f(l_{2}\cup l_{3})=0$, then $f(l_{1})=r_{1}, f(l_{1})- f(l_{2})= r_{1}\geq f'(l_{1})- f'(l_{2})$, this is a contradiction with our assumptions.\\

\noindent$\displaystyle When$ $f(l_{2})>0$, this is totally a symmetric case with $f(l_{1})>0$, we do not repeat it here.\\

\noindent$\displaystyle When$ $f(l_{1})= f(l_{2})=0$, then $f(l_{2}\cup l_{3})=r_{1}, f(l_{1}\cup l_{3})=r_{2}$ and $\forall e\in l_{3}, f(e)= r_{1}+r_{2}\geq f'(e)$. Because either $f(l_{1})- f(l_{2})\geq f'(l_{1})- f'(l_{2})$ or $f(l_{1})- f(l_{2})\leq f'(l_{1})- f'(l_{2})$, we have either $\forall e\in l_{1}, f(e)\geq f'(e)$ or $\forall e\in l_{2}, f(e)\geq f'(e)$, thus our theorem holds on.\\

\noindent$\displaystyle Case $3$:$ $|V^{*}|=4$, according to the symmetry of terminals and flows, we just need to prove in Figure~\ref{lemma2}(b) and (c), our theorem holds on.\\

\noindent$\displaystyle In$ Figure~\ref{lemma2}(b): we denote $P_1=\{ l_{1}, l_{2}\cup l_{3}\cup l_{4}\}, P_2=\{ l_{3}, l_{1}\cup l_{2}\cup l_{4}\}$, which is shown in Figure~\ref{lemma2}(b). $f$ and $f'$ are both feasible flows for $(G,(s_i,t_i)_{i=1,2},\mathbf r)$. Thus we have next equations:

\begin{equation} f(l_{1})+ f(l_{2}\cup l_{3}\cup l_{4})= f'(l_{1})+ f'(l_{2}\cup l_{3}\cup l_{4})= r_{1};
\end{equation}

\begin{equation} f(l_{3})+ f(l_{1}\cup l_{2}\cup l_{4})= f'(l_{3})+ f'(l_{1}\cup l_{2}\cup l_{4})= r_{2};
\end{equation}

\begin{equation} \begin{aligned}   \forall e\in l_{1}, f(e)= f(l_{1})+ f(l_{1}\cup l_{2}\cup l_{4})= f(l_{1})+ r_{2}- f(l_{3});\\
                                                          f'(e)= f'(l_{1})+ f'(l_{1}\cup l_{2}\cup l_{4})= f'(l_{1})+ r_{2}- f'(l_{3});
\end{aligned}\end{equation}

\begin{equation} \begin{aligned}   \forall e\in l_{2}, f(e)= f(l_{1}\cup l_{2}\cup l_{4})+ f(l_{2}\cup l_{3}\cup l_{4})= r_{1}+ r_{2}- f(l_{1})- f(l_{3});\\
                                                        f'(e)= f'(l_{1}\cup l_{2}\cup l_{4})+ f'(l_{2}\cup l_{3}\cup l_{4})= r_{1}+ r_{2}- f'(l_{1})- f'(l_{3});
\end{aligned}\end{equation}

\begin{equation} \begin{aligned}   \forall e\in l_{3}, f(e)= f(l_{2}\cup l_{3}\cup l_{4})+ f(l_{3})= r_{1}- f(l_{1})+ f(l_{3});\\
                                                         f'(e)= f'(l_{2}\cup l_{3}\cup l_{4})+ f'(l_{3})= r_{1}- f'(l_{1})+ f'(l_{3});
\end{aligned}\end{equation}

\begin{equation} \begin{aligned}   \forall e\in l_{4}, f(e)= f(l_{1}\cup l_{2}\cup l_{4})+ f(l_{2}\cup l_{3}\cup l_{4})= r_{1}+ r_{2}- f(l_{1})- f(l_{3});\\
                                                        f'(e)= f'(l_{1}\cup l_{2}\cup l_{4})+ f'(l_{2}\cup l_{3}\cup l_{4})= r_{1}+ r_{2}- f'(l_{1})- f'(l_{3});
\end{aligned}\end{equation}

\noindent$\displaystyle When$ $f(l_{1})>0$: \\

If $f(l_{1})- f(l_{3})\geq f'(l_{1})- f'(l_{3})$, then $\forall e\in l_{1}, f(e)\geq f'(e)$, our theorem holds on; \\

If $f(l_{1})- f(l_{3})< f'(l_{1})- f'(l_{3})$, then $\forall e\in l_{3}, f(e)> f'(e)$. Now if $f(l_{3})>0$, our theorem holds on;\\

If $f(l_{1})- f(l_{3})< f'(l_{1})- f'(l_{3})$ and $f(l_{3})=0$, we have $f(l_{1})+ f(l_{3})= f(l_{1})- f(l_{3})< f'(l_{1})- f'(l_{3})\leq f'(l_{1})+ f'(l_{3})$,
then $\forall e\in l_{2}\cup l_{4}, f(e)> f'(e), \forall e\in l_{3}, f(e)> f'(e)$. Now if $f(l_{2}\cup l_{3}\cup l_{4})>0$, our theorem holds on;\\

If $f(l_{1})- f(l_{3})< f'(l_{1})- f'(l_{3})$, $f(l_{3})=0$ and $f(l_{2}\cup l_{3}\cup l_{4})=0$, then $f(l_{1})=r_{1}, f(l_{1})- f(l_{3})= r_{1}\geq f'(l_{1})- f'(l_{3})$, this is a contradiction with our assumptions.\\

\noindent$\displaystyle When$ $f(l_{3})>0$, this is totally a symmetric case with $f(l_{1})>0$, we do not repeat it here.\\

\noindent$\displaystyle When$ $f(l_{1})= f(l_{3})=0$, then $f(l_{2}\cup l_{3}\cup l_{4})=r_{1}, f(l_{1}\cup l_{2}\cup l_{4})=r_{2}$ and $\forall e\in l_{2}\cup l_{4}, f(e)= r_{1}+r_{2}\geq f'(e)$. Because either $f(l_{1})- f(l_{3})\geq f'(l_{1})- f'(l_{3})$ or $f(l_{1})- f(l_{3})\leq f'(l_{1})- f'(l_{3})$, we have either $\forall e\in l_{1}, f(e)\geq f'(e)$ or $\forall e\in l_{3}, f(e)\geq f'(e)$, thus our theorem holds on.\\

\noindent$\displaystyle In$ Figure~\ref{lemma2}(c): we denote $P_1=\{ l_{1}\cup l_{2}, l_{3}\cup l_{4}\}, P_2=\{ l_{1}\cup l_{4}, l_{2}\cup l_{3}\}$, which is shown in Figure~\ref{lemma2}(c). $f$ and $f'$ are both feasible flows for $(G,(s_i,t_i)_{i=1,2},\mathbf r)$. Thus we have next equations:

\begin{equation} f(l_{1}\cup l_{2})+ f(l_{3}\cup l_{4})= f'(l_{1}\cup l_{2})+ f'(l_{3}\cup l_{4})= r_{1};
\end{equation}

\begin{equation} f(l_{1}\cup l_{4})+ f(l_{1}\cup l_{2})= f'(l_{1}\cup l_{4})+ f'(l_{2}\cup l_{3})= r_{2};
\end{equation}

\begin{equation} \begin{aligned}   \forall e\in l_{1}, f(e)= f(l_{1}\cup l_{2})+ f(l_{1}\cup l_{4})= f(l_{1}\cup l_{2})+ r_{2}- f(l_{2}\cup l_{3});\\
                                                          f'(e)= f'(l_{1}\cup l_{2})+ f'(l_{1}\cup l_{4})= f'(l_{1}\cup l_{2})+ r_{2}- f'(l_{2}\cup l_{3});
\end{aligned}\end{equation}

\begin{equation} \begin{aligned}   \forall e\in l_{2}, f(e)= f(l_{1}\cup l_{2})+ f(l_{2}\cup l_{3});\\
                                                        f'(e)= f'(l_{1}\cup l_{2})+ f'(l_{2}\cup l_{3});
\end{aligned}\end{equation}

\begin{equation} \begin{aligned}   \forall e\in l_{3}, f(e)= f(l_{2}\cup l_{3})+ f(l_{3}\cup l_{4})= f(l_{2}\cup l_{3})+ r_{1}- f(l_{1}\cup l_{2});\\
                                                          f'(e)= f'(l_{2}\cup l_{3})+ f'(l_{3}\cup l_{4})= f'(l_{2}\cup l_{3})+ r_{1}- f'(l_{1}\cup l_{2});
\end{aligned}\end{equation}

\begin{equation} \begin{aligned}   \forall e\in l_{4}, f(e)= f(l_{1}\cup l_{4})+ f(l_{3}\cup l_{4})= r_{1}+ r_{2}- f(l_{1}\cup l_{2})- f(l_{2}\cup l_{3});\\
                                                       f'(e)= f'(l_{1}\cup l_{4})+ f'(l_{3}\cup l_{4})= r_{1}+ r_{2}- f'(l_{1}\cup l_{2})- f'(l_{2}\cup l_{3});
\end{aligned}\end{equation}

According to the symmetry of flows, we just need to prove when $f(l_{1}\cup l_{2})>0$, our theorem holds on.\\

\noindent$\displaystyle When$ $f(l_{1}\cup l_{2})>0$:\\

If $\forall e\in l_{1}, f(e)\geq f'(e)$ and $\forall e\in l_{2}, f(e)\geq f'(e)$, our theorem holds on.\\

If $\forall e\in l_{1}, f(e)< f'(e)$ and $f(l_{2}\cup l_{3})>0$, then $f(l_{1}\cup l_{2})- f(l_{2}\cup l_{3})< f'(l_{1}\cup l_{2})- f'(l_{2}\cup l_{3}); \forall e\in l_{3}, f(e)> f'(e)$. Now if $\forall e\in l_{2}, f(e)\geq f'(e)$, our theorem holds on.\\

If $\forall e\in l_{1}, f(e)< f'(e)$,$f(l_{2}\cup l_{3})>0$ and $\forall e\in l_{2}, f(e)< f'(e)$, then $f(l_{1}\cup l_{2})- f(l_{2}\cup l_{3})< f'(l_{1}\cup l_{2})- f'(l_{2}\cup l_{3}); \forall e\in l_{3}, f(e)> f'(e)$;$f(l_{1}\cup l_{2})+ f(l_{2}\cup l_{3})< f'(l_{1}\cup l_{2})+ f'(l_{2}\cup l_{3}); \forall e\in l_{4}, f(e)> f'(e)$. Now if $f(l_{3}\cup l_{4})>0$, our theorem holds on.\\

If $\forall e\in l_{1}, f(e)< f'(e)$,$f(l_{2}\cup l_{3})>0$,$\forall e\in l_{2}, f(e)< f'(e)$ and $f(l_{3}\cup l_{4})=0$, then $f(l_{1}\cup l_{2})= r_{1}, f(l_{1}\cup l_{2})- f(l_{2}\cup l_{3})< f'(l_{1}\cup l_{2})- f'(l_{2}\cup l_{3})$. Thus $f(l_{2}\cup l_{3})> f'(l_{2}\cup l_{3})$. Thus $f(l_{1}\cup l_{2})+ f(l_{2}\cup l_{3})> f'(l_{1}\cup l_{2})+ f'(l_{2}\cup l_{3})$, which is a contradiction with $\forall e\in l_{2}, f(e)< f'(e)$.\\

If $\forall e\in l_{1}, f(e)< f'(e)$ and $f(l_{2}\cup l_{3})=0$, then $\forall e\in l_{3}, f(e)> f'(e), f(l_{1}\cup l_{2})= f(l_{1}\cup l_{2})- f(l_{2}\cup l_{3})< f'(l_{1}\cup l_{2})- f'(l_{2}\cup l_{3})\leq f'(l_{1}\cup l_{2})\leq r_{1}$, thus we have $f(l_{3}\cup l_{4})= r_{1}-f(l_{1}\cup l_{2})>0$. Now if $\forall e\in l_{4}, f(e)\geq f'(e)$, our theorem holds on.\\

If $\forall e\in l_{1}, f(e)< f'(e)$,$f(l_{2}\cup l_{3})=0$ and $\forall e\in l_{4}, f(e)< f'(e)$, then $f(l_{1}\cup l_{2})- f(l_{2}\cup l_{3})< f'(l_{1}\cup l_{2})- f'(l_{2}\cup l_{3})$,$f(l_{1}\cup l_{2})+ f(l_{2}\cup l_{3})> f'(l_{1}\cup l_{2})+ f'(l_{2}\cup l_{3})$ and $f(l_{2}\cup l_{3})=0$ ,this is impossible.\\

If $\forall e\in l_{1}, f(e)\geq f'(e)$ and $\forall e\in l_{2}, f(e)< f'(e)$, then $f(l_{1}\cup l_{2})- f(l_{2}\cup l_{3})\geq f'(l_{1}\cup l_{2})- f'(l_{2}\cup l_{3})$,$f(l_{1}\cup l_{2})+ f(l_{2}\cup l_{3})< f'(l_{1}\cup l_{2})+ f'(l_{2}\cup l_{3})$. Thus $\forall e\in l_{3}, f(e)\leq f'(e)$ and $\forall e\in l_{4},
f(e)> f'(e)$. Now if $f(l_{1}\cup l_{4})>0$, our theorem holds on.\\

If $\forall e\in l_{1}, f(e)\geq f'(e)$,$\forall e\in l_{2}, f(e)< f'(e)$ and $f(l_{1}\cup l_{4})=0$, then we have $f(l_{1}\cup l_{2})- f(l_{2}\cup l_{3})\geq f'(l_{1}\cup l_{2})- f'(l_{2}\cup l_{3})$,$f(l_{1}\cup l_{2})+ f(l_{2}\cup l_{3})< f'(l_{1}\cup l_{2})+ f'(l_{2}\cup l_{3})$ and $f(l_{2}\cup l_{3})=r_{2}\geq f'(l_{2}\cup l_{3})$. Thus $f(l_{1}\cup l_{2})\geq f'(l_{1}\cup l_{2}),f(l_{1}\cup l_{2})< f'(l_{1}\cup l_{2})$ which is impossible.\\

Above all, whatever cases our theorem holds on, so we finish the proof of Theorem.

\end{proof}

\subsection{$k$-commodity flow on a cycle with $k\geq 3$}

The above theorem is a little surprising. It means given a two-commodity instance on a cycle, any two distinct network flow $f$ and $f'$, compared with $f'$, $f$ can't decrease every path's flow amount at the same time. A similar result can not hold for $k$-commodity case when $k\geq 3$ and we give out an instance as following:\\\\
$C=(V,E)$ is a cycle of length $6$, as shown in figure~\ref{hexagon}. Consider three-commodity instance $(C,(s_i,t_i)_{i=1}^3,\mathbf r)$ with $\mathbf r=(3,3,3)$,

\begin{figure}[htbp]
\begin{center}
\includegraphics[scale=0.25]{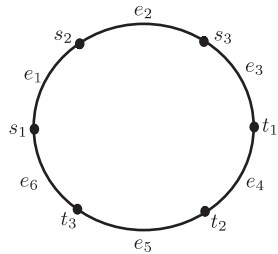}
\caption{\label{hexagon}$3$-commodity instance $(C,(s_i,t_i)_{i=1}^3)$.}
\end{center}
\end{figure}

Notice that $P_1=\{ e_{1}\cup e_{2}\cup e_{3},  e_{6}\cup e_{5}\cup e_{4}\}$,  $P_2= \{ e_{2}\cup e_{3}\cup e_{4},  e_{1}\cup e_{6}\cup e_{5}\}$ and $P_3=\{   e_{3}\cup e_{4}\cup e_{5},e_{2}\cup e_{1}\cup e_{6}\}$. It is easy to verify there exist two feasible flows $f$ and $f'$ in $(C,(s_i,t_i)_{i=1,2,3},\mathbf r)$：\\
$f(e_{1}\cup e_{2}\cup e_{3})=2$,  $f(e_{6}\cup e_{5}\cup e_{4})=1$, $f(e_{2}\cup e_{3}\cup e_{4})=1$,  $f(e_{1}\cup e_{6}\cup e_{5})=2$, $f(e_{3}\cup e_{4}\cup e_{5})=2$,  $f(e_{2}\cup e_{1}\cup e_{6})=1$;
$f'(e_{1}\cup e_{2}\cup e_{3})=1$,  $ f'(e_{6}\cup e_{5}\cup e_{4})=2$, $f'(e_{2}\cup e_{3}\cup e_{4})=2$,  $ f'(e_{1}\cup e_{6}\cup e_{5})=1$, $f'(e_{3}\cup e_{4}\cup e_{5})=1$,  $ f'(e_{2}\cup e_{1}\cup e_{6})=2$. The next table gives out each edge's flows under $f$ and $f'$:   
\begin{center}
 \begin{tabular}{|c|c|c|c|c|c|c|}
  \hline
   $j$ & 1 & 2 & 3 & 4 & 5 & 6\\
   \hline
   $f(e_j)$ & 5 & 4 & 5 & 4 & 5 & 4\\
   $f'(e_j)$ & 4 & 5 & 4 & 5 & 4 & 5\\
  \hline
 \end{tabular}
\end{center}

From this table, it is easy to see for any path $p$ in $P$, the flow $f$ has some edges with less flow amount than the flow $f'$. Thus the result in theorem~\ref{flow allocation2} doesn't hold on for $k$-commodity case when $k\geq 3$.

\section{Conclusion}

Conclusion: In this paper, we prove a combinatorial property of flows on a cycle. $C(V,E)$ is an undirected cycle with two commodities: $\{s_{1},t_{1}\}, \{s_{2},t_{2}\}$;$r_1>0,r_2>0, \mathbf r=(r_i)_{i=1,2}$ and $f,f'$ are both feasible flows for $(C,(s_i,t_i)_{i=1,2},\mathbf r)$. Then $\exists i\in\{1,2\}, p\in P_i, f(p)>0, \forall e\in p, f(e)\geq f'(e)$ ; Here for each $i\in\{1,2\}$, let $P_i$ be the set of $s_i$-$t_i$ paths in $C$ and $P=\cup_{i=1,2}P_i$. This means given a two-commodity instance on a cycle, any two distinct network flow $f$ and $f'$, compared with $f'$, $f$ can't decrease every path's flow amount at the same time. This combinatorial property is a generalization from single-commodity case to two-commodity case, and we also give an instance to illustrate the combinatorial property doesn't hold on for $k-$commodity case when $k\geq 3$.
\bibliography{ref}

\end{document}